\documentclass[12pt]{extarticle}
\usepackage[colorlinks=true, allcolors=blue]{hyperref}
\usepackage{bm,amssymb,amsmath,amsthm,microtype}
\usepackage[total={148mm,193mm}]{geometry}
\usepackage[bitstream-charter]{mathdesign}

\author{Johan Andersson\thanks{Email:johan.andersson@oru.se \, Address: Department of Mathematics, School of Science and Technology, {\"O}rebro University, {\"O}rebro, SE-701 82 Sweden. } }
 
\title{Polynomial approximation avoiding values in  sets II}
\theoremstyle{plain} 
\newtheorem{thm}{Theorem}  
\newtheorem{lem}{Lemma}
\newtheorem{prop}{Proposition}
\newtheorem{cor}{Corollary} 
\theoremstyle{definition}

\newtheorem{rem}{Remark}
\date{}
\newcommand{\norm}[1]{\left \Vert {#1} \right \Vert}
\newcommand{\vvx}{x}

\def\cprime{$'$}

\newcommand{\C}{{\mathbb C}}

\newcommand{\R}{{\mathbb R}}
\newcommand{\abs}[1]{{\left| {#1} \right|}}
\renewcommand{\emptyset}{\varnothing}
\newcommand{\p}[1]{\left( {#1} \right)}
\begin{document}

\maketitle  
      
\begin{abstract} 
 We prove some results on when functions on compact sets $K  \subset \mathbb C$ can be approximated by polynomials avoiding values in given sets. We also prove some higher dimensional analogues. In  particular we prove that a continuous function from a compact set $K \subset \mathbb R^n$ without interior points to $\mathbb R^n$ can be uniformly approximated by a polynomial mapping avoiding values in any given countable set $A \subset \mathbb R^n$, giving a real $n$-dimensional analogue of  a recent version of Lavrentiev's theorem of Andersson and Rousu. We also prove the same result for infinite dimensional Banach spaces.
\end{abstract}
   
\maketitle
\section{Introduction}
In \cite{AndRos}, \cite{Rousu} the following version of Lavrent\'ev's theorem was proved
 \begin{thm} \label{TH1}
  Let $A \subset \C $ be any countable set, let $K \subset \C$ be  a compact set with connected complement and without interior points, and let $f$ be a continuous function on $K$. Then  given any $\varepsilon>0$ there exists some polynomial $p$ such that $p(z) \not \in A$ if $z \in K$ and such that
  \[
    \max_{z \in K} \abs{f(z)-p(z)}<\varepsilon.
  \]
 \end{thm}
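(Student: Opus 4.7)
The plan is to combine Lavrent\'ev's classical approximation theorem with a Baire category argument in a finite-dimensional space of polynomial coefficients. First, Lavrent\'ev's theorem (applicable since $K$ has empty interior and connected complement) supplies a polynomial $q$ of some degree $N$ with $\max_{z \in K}\abs{f(z)-q(z)}<\varepsilon/2$. Identify polynomials of degree at most $N$ with their coefficient vectors $c=(c_0,\ldots,c_N) \in \C^{N+1}$ via $p_c(z)=\sum_{i=0}^N c_i z^i$; then the set $U=\{c : \max_{z \in K}\abs{p_c(z)-q(z)}<\varepsilon/2\}$ is a nonempty open subset of $\C^{N+1}$.

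The heart of the matter is to show that for every $a \in \C$, the set $B_a=\{c \in \C^{N+1} : a \in p_c(K)\}$ is closed and nowhere dense in $\C^{N+1}$. Closedness is immediate from compactness of $K$ together with the joint continuity of $(c,z) \mapsto p_c(z)$. Empty interior will follow from the following lemma, whose proof is the main technical step: \emph{if $K \subset \C$ has empty interior and $r$ is a non-constant polynomial, then $r(K)$ has empty interior.} Granting this, fix any coefficients with $(c_1,\ldots,c_N)\neq 0$, so that $r(z)=\sum_{i \geq 1} c_i z^i$ is non-constant; the slice of $B_a$ at such a choice equals $a-r(K)$ and hence has empty interior in $\C$. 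A putative open ball inside $B_a$ would, after nudging its centre within itself to ensure the higher coefficients are nonzero, yield an open disk of $c_0$-values contained in that slice, contradicting what was just shown.

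To prove the lemma, suppose $r(K)$ has interior and choose an interior point $b_0$ that is not one of the finitely many critical values of $r$. Then $r^{-1}(b_0)$ consists of exactly $d=\deg r$ points, and for a sufficiently small ball $B \ni b_0$ contained in the interior of $r(K)$, the preimage $r^{-1}(B)$ splits as a disjoint union of open sets $V_1,\ldots,V_d$, each mapped homeomorphically onto $B$ by $r$. Because every $b \in B$ lies in $r(K)$ it admits a preimage in $K$, so the closed sets $C_i=\{b \in B : (r|_{V_i})^{-1}(b) \in K\}$ cover $B$. By the Baire category theorem some $C_i$ contains an open ball $B' \subset B$; then $(r|_{V_i})^{-1}(B')$ is a nonempty open subset of $K$, contradicting the hypothesis that $K$ has empty interior.

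With $B_a$ closed and nowhere dense for every $a$, the countable union $\bigcup_{a \in A} B_a$ is meager in the Baire space $\C^{N+1}$, so its complement meets the nonempty open set $U$. Picking $c^\star$ in this intersection, the polynomial $p=p_{c^\star}$ satisfies $\max_{z \in K}\abs{f(z)-p(z)}<\varepsilon$ and $p(z) \notin A$ for every $z \in K$. The main obstacle throughout is the lemma on preservation of empty interior by a non-constant polynomial; once that key fact is in hand, everything else reduces to a routine finite-dimensional Baire argument.
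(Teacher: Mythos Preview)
Your proof is correct, and it shares its central technical ingredient with the paper: both arguments hinge on the lemma that the image of a compact set without interior points under a non-constant polynomial again has empty interior (the paper's Lemma~3, proved there as a special case of a result for polynomial maps $\R^n\to\R^n$). Your proof of this lemma via local inverses over a non-critical value and a finite Baire/union-of-nowhere-dense argument is essentially the same idea as the paper's. Where you diverge is in the endgame: the paper only perturbs the constant term, setting $p=q+\xi$ and finding $\xi$ by an explicit iterative Cauchy-sequence construction (its Proposition~3), whereas you run a Baire category argument in the full coefficient space $\C^{N+1}$. Your approach is perfectly valid but more machinery than needed: since your nowhere-density argument for $B_a$ ultimately only uses the $c_0$-slice, you could equally well have worked in $\C$ alone, observing that each $a-q(K)$ is closed and nowhere dense and invoking Baire there; that would amount to a one-line alternative proof of the paper's Proposition~3. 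One small point of phrasing: your lemma as stated (``$K$ has empty interior'') is actually false without closedness, since two disjoint sets with empty interior can have images under $z\mapsto z^2$ covering an open set; your proof implicitly uses that $K$ is closed when you call the $C_i$ closed, which is fine in context since $K$ is compact, but the hypothesis should be stated.
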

 Lavrentiev's theorem \cite{Lavrentiev} is the special case where $A=\emptyset$ and Theorem \ref{TH1}  also generalize  \cite[Theorem 1]{Andersson}\footnote{A result which has applications to the Riemann zeta-function.} where it was proved for $A=\{0\}$.
 Examples of sets $K \subset \C$ with positive two dimensional Lebesgue measure where the result holds true can be given by e.g. $[0,1]+i S$ where $S \subset[0,1]$ is a fat Cantor set. In the case when the compact set $K$ has  measure zero\footnote{in which case it automatically has an empty interior} we prove this in a simpler way than in \cite{AndRos},\cite{Rousu} as a consequence of the fact that the difference set\footnote{where the difference set   $A-B=\{a-b: a \in A, b \in B\}$ here adheres to standard terminology in additive combinatorics.}   $p(K)-A$ must then also have measure zero so that its complement $(p(K)-A)^\complement$ is dense in $\C$. The general idea  will also allow us to generalize this result to higher dimensions, and also allow a sharper result when the set $K$ does not have full dimension. Finally we will adapt the method of \cite{AndRos}, \cite{Rousu} which together with the observation that if $p:\R^n \to \R^n$ is a polynomial mapping then the fact that $K$ is a compact set without interior points implies that the image $p(K)$  is also a compact set without interior points. This will allow us to prove a real $n-$dimensional analogue of Theorem \ref{TH1}.
 
 \section{Sumsets, translates of sets avoiding sets and polynomial approximation}
 \subsection{Some notation}
We use the following standard notation for sumsets, difference sets and sums of an element and a set 
 $$A \pm B=\{a \pm b: a \in A, b \in B\} , \qquad c \pm A=\{c \pm a: a \in A\},$$
 where $c$ lies in some vector space and $A,B$ are subsets of said vector space.
\subsection{A Lebesgue measure argument}
In this and the next subsection we give some conditions (Propositions \ref{prop1}, \ref{prop2}) on when translates of sets avoids sets each of which gives a corresponding result on polynomial approximation avoiding sets (Theorems \ref{TH2}, \ref{TH3}). These Propositions in turn are direct consequences of some elementary results on sumsets (or difference sets),  Lemmas  \ref{lem1},\ref{lem2}.
 \begin{lem} \label{lem1} Let $A,K \subset \R^n$, where $A$ is countable and $K$ has $n$-dimensional Lebesgue measure 0. Then the Lebesgue measure of $A\pm K$ is also 0.
 \end{lem}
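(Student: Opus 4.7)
The plan is to reduce this to two fundamental facts about Lebesgue measure: translation invariance, and countable subadditivity. The key structural observation is that $A \pm K$ can be written as a countable union of translates (up to sign) of $K$, since
\[
 A \pm K = \bigcup_{a \in A} (a \pm K),
\]
and $A$ is countable by hypothesis.

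First I would note that for the ``$+$'' version, each summand $a + K = \{a + k : k \in K\}$ is simply a translate of $K$, so by translation invariance of $n$-dimensional Lebesgue measure its measure equals that of $K$, namely $0$. For the ``$-$'' version, the summand $a - K$ is a translate of $-K$, and $-K$ has the same measure as $K$ because Lebesgue measure is invariant under the reflection $x \mapsto -x$ (this map has Jacobian determinant $\pm 1$). Hence in either case every summand in the displayed union has measure $0$.

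Second, countable subadditivity of Lebesgue measure then yields
\[
 m(A \pm K) \le \sum_{a \in A} m(a \pm K) = 0,
\]
so $A \pm K$ has measure $0$, as required.

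There is no serious obstacle; the only mild subtlety is to justify that the set $A \pm K$ is measurable at all, which is automatic from the display above once one observes that each translate or reflection of $K$ is contained in a Borel null set (any measure-zero set is contained in a $G_\delta$ null set), so $A \pm K$ is contained in a countable union of Borel null sets and is therefore Lebesgue measurable with measure $0$.
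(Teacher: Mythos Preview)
Your argument is correct and is essentially the same as the paper's: decompose $A\pm K$ as the countable union $\bigcup_{a\in A}(a\pm K)$, use translation (and, for the minus case, reflection) invariance to see each piece is null, and conclude by countable subadditivity. You are in fact slightly more careful than the paper, since you explicitly treat the reflection in the ``$-$'' case and address measurability, but the underlying proof is identical.
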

\begin{proof}
 Let $A=\{a_j\}_{j=1}^\infty$ . Then
 $$A\pm K=\bigcup_{j=1}^\infty (a_j\pm K),$$ and by the subadditivity and the translation invariance of the $n$-dimensional Lebesgue measure $\mu$ we have that 
 $$\mu(A \pm K)=\mu\p{\bigcup_{j=1}^\infty (a_j\pm K)}   \leq   \sum_{j=1}^\infty \mu(a_j \pm K)=\sum_{j=1}^\infty 0 =0. $$ 
 \end{proof}
Our first Proposition which will give a simple proof of a special case of Theorem \ref{TH1} now follows as a direct consequence of Lemma \ref{lem1}.
\begin{prop} \label{prop1}
  Let $\varepsilon>0$, let $K \subset \R^n$ have $n$-dimensional Lebesgue measure 0 and let $A \subset \R^n$ be countable. Then for almost all    $\xi \in \R^n$ such that $|\xi|<\varepsilon$ then $(\xi +K) \cap A=\emptyset$. 
 \end{prop}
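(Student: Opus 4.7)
The plan is to reduce the statement to Lemma \ref{lem1} by characterizing the ``bad'' set of translation vectors $\xi$ as a difference set.

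First, I would unwind what it means for $(\xi+K)\cap A$ to be nonempty. There exist $k \in K$ and $a \in A$ with $\xi + k = a$ if and only if $\xi = a - k$ for some such pair, i.e.\ if and only if $\xi \in A - K$. Hence the exceptional set of $\xi \in \R^n$ for which $(\xi+K)\cap A \neq \emptyset$ is precisely $A - K$.

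Next, since $K$ has $n$-dimensional Lebesgue measure $0$ and $A$ is countable, Lemma \ref{lem1} immediately gives $\mu(A - K) = 0$. Intersecting with the open ball $B = \{\xi \in \R^n : |\xi| < \varepsilon\}$ then yields $\mu\bigl((A - K) \cap B\bigr) = 0$, so for almost every $\xi \in B$ we have $\xi \notin A - K$, i.e.\ $(\xi + K) \cap A = \emptyset$, as claimed.

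There is essentially no obstacle here beyond correctly identifying the exceptional set; the entire content of the proposition is packaged into Lemma \ref{lem1}, and the translation-invariance step that makes this work has already been used in the proof of that lemma.
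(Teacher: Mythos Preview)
Your proof is correct and follows essentially the same approach as the paper: identify the exceptional set as the difference set $A-K$, apply Lemma~\ref{lem1} to see it has measure zero, and intersect with the $\varepsilon$-ball. If anything, you spell out the equivalence $(\xi+K)\cap A \neq \emptyset \iff \xi \in A-K$ more carefully than the paper does.
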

\begin{proof}  
By Lemma \ref{lem1} the difference set  $A-K$ has measure zero and thus also its subset $(A-K)\cap B_{\varepsilon}^\circ(0)$ also has measure zero. This means that for almost all  $|\xi|<\varepsilon$ then  $ \xi \not \in (A-K)$, which in other words means that $(\xi+K)\cap A=\emptyset$.
 \end{proof}
 Since $\C$  may be identified by $\R^2$
 we are thus ready to prove the following special case of Theorem \ref{TH1}.
\begin{thm} \label{TH2}
  Let $A \subset \C $ be any countable set, let $K \subset \C$ be  a compact set with connected complement and with measure 0, and let $f$ be a continuous function on $K$. Then  given any $\varepsilon>0$ there exists some polynomial $p$ such that $p(z) \not \in A$ if $z \in K$ and such that
  \[
    \max_{z \in K} \abs{f(z)-p(z)}<\varepsilon.
  \]
 \end{thm}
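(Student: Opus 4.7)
The plan is to combine classical Lavrentiev/Mergelyan approximation with Proposition \ref{prop1} by shifting an initial polynomial approximant by a small generic constant. The resulting polynomial will both approximate $f$ and have image disjoint from $A$.

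First I would apply the classical Lavrentiev theorem (the $A=\emptyset$ case) to obtain a polynomial $q$ such that $\max_{z\in K}\abs{f(z)-q(z)}<\varepsilon/2$; this is legitimate since $K$ is compact with connected complement, and having measure zero implies empty interior. Next I would verify that $q(K)\subset\C\cong\R^2$ has two-dimensional Lebesgue measure zero: since $q$ is a polynomial and $K$ is compact, $q$ is Lipschitz on $K$, and Lipschitz maps send measure-zero sets to measure-zero sets (a standard covering argument using that a Lipschitz image of a ball of radius $r$ lies in a ball of radius $Lr$).

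Now I would invoke Proposition \ref{prop1} with $n=2$ applied to the countable set $A$ and the measure-zero compact set $q(K)$, with tolerance $\varepsilon/2$. This yields some $\xi\in\C$ with $\abs{\xi}<\varepsilon/2$ such that $(\xi+q(K))\cap A=\emptyset$. I would then define the polynomial $p(z)=q(z)+\xi$. For any $z\in K$ we have $p(z)=q(z)+\xi\in \xi+q(K)$, so $p(z)\notin A$. Moreover
\[
  \max_{z\in K}\abs{f(z)-p(z)}\leq \max_{z\in K}\abs{f(z)-q(z)}+\abs{\xi}<\frac{\varepsilon}{2}+\frac{\varepsilon}{2}=\varepsilon,
\]
which completes the argument.

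There is no real obstacle here, since the hard analytic content (Lavrentiev on the one hand, the measure-zero translate statement on the other) has already been imported. The only subtle point worth being careful about is the Lipschitz-image step ensuring $q(K)$ has planar measure zero — without this the application of Proposition \ref{prop1} would not be justified — but this is a routine consequence of $q$ being $C^1$ on the compact set $K$.
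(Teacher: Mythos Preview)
Your proposal is correct and follows essentially the same route as the paper: apply Lavrentiev to get $q$ with $\max_{z\in K}\abs{f(z)-q(z)}<\varepsilon/2$, note that $q(K)$ has planar measure zero, use Proposition~\ref{prop1} to find a small shift $\xi$ with $(\xi+q(K))\cap A=\emptyset$, and set $p=q+\xi$. You are in fact slightly more explicit than the paper on why $q(K)$ has measure zero (the Lipschitz covering argument), which is fine.
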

 \begin{proof}
 Since $K$ has measure zero it has no interior points and 
 by Lavrientiev's theorem there exist some polynomial $q$ such that 
\begin{gather} \label{a}
    \max_{z \in K} \abs{f(z)-q(z)}<\frac{\varepsilon} 2.
  \end{gather}
  Since $K$ has measure zero and $q$ is a polynomial, then also $q(K)$ has measure zero. 
By Proposition \ref{prop1}  we have that there exists some $\xi \in \C$ with 
 \begin{gather}
 \label{a2} 
\abs{\xi}<\frac {\varepsilon} 2,
\end{gather}
such that $(q(K)+\xi) \cap A=\emptyset$, which in other words means that $p(z):=q(z)+\xi$ does not attain values in  $A$ for $z \in K$. Our conclusion follows from \eqref{a}, \eqref{a2} and the triangle inequality.
\end{proof}
\subsection{A Hausdorff dimension argument}
 We now replace the Lebesgue measure with the Hausdorff and upper box counting dimension. The following holds true 
    \begin{lem}  \label{lem2}
       Suppose that $A,K \in \R^n$. Then
        \begin{gather} \label{aa1}
          \dim_H(A \pm K) \leq \dim_{H}(A)+\overline{\dim}_{B}(K)
        \end{gather}
         where $\dim_H$ and $\overline{\dim}_B$ denotes the Hausdorff dimension and upper box counting dimension\footnote{which coincides with the box/Minkowski dimension when it is defined \cite[Chapter 3.1]{falconer}} respectively.
         \end{lem}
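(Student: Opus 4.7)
The plan is to reduce the statement to showing that $\mathcal{H}^{s+t}(A \pm K) = 0$ for every $s > \dim_H(A)$ and $t > \overline{\dim}_B(K)$, from which \eqref{aa1} follows by taking the infimum over such $s,t$. The two sign choices are handled identically, so I focus on $A+K$. The core idea is to combine an efficient Hausdorff cover of $A$ at exponent $s$ with uniform box-counting covers of $K$ at scales matched to the diameters of the $A$-cover.

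Fix $s$ and $t$ as above. Since $\overline{\dim}_B(K) < t$, there exist $C>0$ and $\delta_0 > 0$ such that for every $0 < r \leq \delta_0$ the set $K$ admits a cover by at most $Cr^{-t}$ sets of diameter at most $r$. Since $\mathcal{H}^s(A) = 0$, for every $\eta > 0$ and every $\delta \in (0,\delta_0]$ there is a countable cover $\{U_i\}$ of $A$ with $\mathrm{diam}(U_i) \leq \delta$ and $\sum_i \mathrm{diam}(U_i)^s < \eta$.

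For each $i$, choose a cover $\{V_{i,j}\}_{j=1}^{N_i}$ of $K$ by sets of diameter at most $\mathrm{diam}(U_i)$ with $N_i \leq C\,\mathrm{diam}(U_i)^{-t}$. Then $\{U_i + V_{i,j}\}_{i,j}$ is a cover of $A+K$ by sets each of diameter at most $2\,\mathrm{diam}(U_i) \leq 2\delta$, and one estimates
\[
  \sum_{i,j} \mathrm{diam}(U_i+V_{i,j})^{s+t} \;\leq\; \sum_i N_i \bigl(2\,\mathrm{diam}(U_i)\bigr)^{s+t} \;\leq\; 2^{s+t}C \sum_i \mathrm{diam}(U_i)^s \;<\; 2^{s+t}C\eta .
\]
This bounds the $2\delta$-Hausdorff content $\mathcal{H}^{s+t}_{2\delta}(A+K)$; letting $\eta \to 0$ and then $\delta \to 0$ yields $\mathcal{H}^{s+t}(A+K) = 0$, hence $\dim_H(A+K) \leq s+t$. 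Infimizing over $s,t$ gives \eqref{aa1}.

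The only subtle point is aligning scales so the uniform box-counting estimate applies to every radius $\mathrm{diam}(U_i)$ simultaneously; this is handled by imposing $\delta \leq \delta_0$ at the outset, which forces each relevant $r=\mathrm{diam}(U_i)$ into the range where $N_r(K) \leq Cr^{-t}$ holds. Everything else is a routine combination of the definitions of Hausdorff content and upper box-counting dimension.
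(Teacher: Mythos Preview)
Your argument is correct and is essentially the standard direct estimate that underlies Falconer's product formula, specialized to the sumset. The paper takes a different, more modular route: it first invokes the product formula $\dim_H(A\times K)\le \dim_H(A)+\overline{\dim}_B(K)$ from \cite[Product formula 7.3]{falconer}, and then observes that $(x,y)\mapsto x\pm y$ is Lipschitz from $\R^{2n}$ to $\R^n$ so that $\dim_H(A\pm K)=\dim_H\bigl(f(A\times K)\bigr)\le \dim_H(A\times K)$ by \cite[Corollary 2.4]{falconer}. The paper's proof is shorter and cleanly separates the two ingredients (product dimension bound and Lipschitz invariance), at the cost of citing two black boxes; your proof is self-contained and makes explicit the scale-matching mechanism that actually drives the inequality, which is pedagogically valuable. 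One tiny technicality worth noting in your write-up: you should either assume $K$ is nonempty and bounded (so that $\overline{\dim}_B(K)$ is defined in the usual sense and the box-counting covers exist), or remark that sets $U_i$ of diameter $0$ can be harmlessly replaced by arbitrarily small balls so that the scale $r=\mathrm{diam}(U_i)$ is always positive.
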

         \begin{proof}
         From \cite[Product formula 7.3, p.99]{falconer} it follows that
         \begin{gather} \label{a1} \dim_H(A \times K)  \leq \dim_{H}(A)+\overline{\dim}_{B}(K). \end{gather}  
         Now let $f:\R^{2n} \to \R^n$ be given by $f(x,y)=x\pm y$ when $x,y \in  \R^n$. It is clear that $f(A\times K)=A\pm K$ and that  $f$ is Lipschitz. Thus by \cite[Corollary 2.4]{falconer}
         \begin{gather} \label{aa2} \dim_H(A  \pm K) \leq \dim_H(A \times K).
          \end{gather}
         Our conclusion follows from the inequalities \eqref{a1} and \eqref{aa2}.
       \end{proof}
 \begin{prop} \label{prop2}
  Let $\varepsilon>0$, let $K,A \in \R^n$ and let $\overline{\dim}_B (K)+\dim_H( A)<n$ or $\dim_H(K)+\overline{\dim}_B (A)<n$. Then for almost all $\xi \in \R^n$ such that $|\xi|<\varepsilon$ then $(\xi +K) \cap A=\emptyset$.
 \end{prop}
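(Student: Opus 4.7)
The plan is to run exactly the same argument as in the proof of Proposition \ref{prop1}, but with Lemma \ref{lem2} playing the role that Lemma \ref{lem1} played there. The key observation is that in $\R^n$ any set with Hausdorff dimension strictly less than $n$ automatically has $n$-dimensional Lebesgue measure zero, so as soon as I can show $\dim_H(A-K)<n$ I am reduced to the measure-theoretic situation already handled.

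First I would verify that either of the two stated dimension hypotheses implies $\dim_H(A-K)<n$. In the case $\overline{\dim}_B(K)+\dim_H(A)<n$, apply Lemma \ref{lem2} directly (with the minus sign) to obtain
\[
  \dim_H(A-K)\leq \dim_H(A)+\overline{\dim}_B(K)<n.
\]
In the case $\dim_H(K)+\overline{\dim}_B(A)<n$, I would use the trivial identity $A-K=-(K-A)$ together with the invariance of Hausdorff dimension under the isometry $x\mapsto -x$, and then apply Lemma \ref{lem2} to $K-A$ instead, giving
\[
  \dim_H(A-K)=\dim_H(K-A)\leq \dim_H(K)+\overline{\dim}_B(A)<n.
\]

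Once this is established, $A-K$ has $n$-dimensional Lebesgue measure zero, hence so does the intersection $(A-K)\cap B_\varepsilon^\circ(0)$. Therefore for almost every $\xi\in\R^n$ with $|\xi|<\varepsilon$ one has $\xi\notin A-K$, which is equivalent to the assertion $(\xi+K)\cap A=\emptyset$.

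There is no real obstacle here: the whole content of the proposition lies in Lemma \ref{lem2}, and the only small thing to watch is the symmetric second hypothesis, which is taken care of by the reflection trick above. The standard fact that Hausdorff dimension less than $n$ implies Lebesgue measure zero is covered in \cite[Chapter 2]{falconer} and can simply be cited.
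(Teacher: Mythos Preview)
Your proposal is correct and is essentially identical to the paper's own proof: apply Lemma~\ref{lem2} to conclude $\dim_H(A-K)<n$, deduce that $A-K$ (hence $(A-K)\cap B_\varepsilon^\circ(0)$) has Lebesgue measure zero, and translate this into $(\xi+K)\cap A=\emptyset$ for almost every admissible $\xi$. Your treatment is in fact slightly more explicit than the paper's, which simply states ``By Lemma~\ref{lem2} the difference set $A-K$ has dimension strictly less than $n$'' without separately spelling out the second hypothesis; your reflection $A-K=-(K-A)$ is exactly the way to make that case precise.
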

 \begin{proof}  
By Lemma \ref{lem2} the difference set  $A-K$ has dimension strictly less than $n$ and thus its $n$-dimensional Lebesgue measure is zero. It follows that also its subset $(A-K)\cap B_{\varepsilon}^\circ(0)$ has measure zero. This means that for almost all  $|\xi|<\varepsilon$ then  $ \xi \not \in (A-K)$, which in other words means that $(\xi+K)\cap A=\emptyset$.
\end{proof}
In a similar way 
 to how Proposition \ref{prop1} implies Theorem \ref{TH2}, Proposition \ref{prop2} implies the following result.
 \begin{thm} \label{TH3}
  Let $K \subset \C$ be a compact set with connected complement and let $f$ be a continuous function on  $K$ and let $A \subset \C $ be a set such that 
  \begin{gather*} \dim_H(A)+\overline{\dim}_B(K)<2, \\ \intertext{or} \overline{\dim}_B(A)+\dim_H(K)<2.
  \end{gather*}
   Then  given any $\varepsilon>0$ there exists some polynomial $p$ such that $p(z) \not \in A$ if $z \in K$ and such that
  \[
    \max_{z \in K} \abs{f(z)-p(z)}<\varepsilon.
  \]
 \end{thm}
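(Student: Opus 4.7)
My plan is to mirror the proof of Theorem \ref{TH2}, replacing the measure-zero hypothesis on $K$ by the dimension hypothesis and using Proposition \ref{prop2} in place of Proposition \ref{prop1} (with $n=2$ and the standard identification $\C \cong \R^2$). First I observe that the dimension hypothesis already forces $K$ to have empty interior: if $K$ contained a non-empty open set, then $\dim_H(K) = \overline{\dim}_B(K) = 2$, and since $\dim_H(A), \overline{\dim}_B(A) \geq 0$, both strict inequalities in the hypothesis would fail. With $K$ having empty interior and connected complement, Lavrent\'ev's theorem supplies a polynomial $q$ with
\[
  \max_{z \in K} \abs{f(z) - q(z)} < \frac{\varepsilon}{2}.
\]

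The second step is to transfer the dimension hypothesis from $K$ to its image $q(K)$. Since $q$ is a polynomial and $K$ is compact, $q\vert_K$ is Lipschitz, and Lipschitz maps do not increase either dimension: for the Hausdorff dimension this is \cite[Corollary 2.4]{falconer} once more, while for the upper box counting dimension it follows from the same covering argument, because an $r$-cover of $K$ of cardinality $N$ yields an $Lr$-cover of $q(K)$ of cardinality at most $N$, where $L$ is the Lipschitz constant. Consequently
\[
  \dim_H(q(K)) \leq \dim_H(K), \qquad \overline{\dim}_B(q(K)) \leq \overline{\dim}_B(K),
\]
so whichever of the two inequalities in the hypothesis is assumed remains valid with $K$ replaced by $q(K)$.

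Applying Proposition \ref{prop2} to the pair $(q(K), A)$ with the identification $\C \cong \R^2$ then yields some $\xi \in \C$ with $\abs{\xi} < \varepsilon/2$ such that $(\xi + q(K)) \cap A = \emptyset$. Setting $p(z) := q(z) + \xi$ gives a polynomial that avoids $A$ on $K$, and the triangle inequality together with the two half-$\varepsilon$ bounds completes the argument. The only genuinely new ingredient relative to Theorem \ref{TH2} is the Lipschitz preservation step for the upper box counting dimension, and this is where I expect the only care is required; once that is in place, the rest is a direct transcription of the earlier proof.
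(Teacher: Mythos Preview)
Your proof is correct and follows essentially the same route as the paper: use the dimension hypothesis to force empty interior, apply Lavrent\'ev's theorem for the approximating polynomial $q$, observe that $q\vert_K$ is Lipschitz so the dimension bounds transfer to $q(K)$, and then invoke Proposition \ref{prop2} to find the translate $\xi$. Your version is in fact slightly more careful, noting explicitly that $q$ is Lipschitz only on the compact set $K$ (rather than globally) and justifying the box-dimension inequality under Lipschitz maps.
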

  \begin{proof}
 Since $K$ does not have full dimension in $\C$ it has no interior points and
 by Lavrientiev's theorem there exist some polynomial $q$ such that 
\begin{gather} \label{a9}
    \max_{z \in K} \abs{f(z)-q(z)}<\frac{\varepsilon} 2.
  \end{gather}
  Since a polynomial $q:\C \to \C$ is Lipschitz we have that $\dim_H(q(K)) \leq \dim_H(K)$ and  $\overline{\dim}_B(q(K)) \leq \overline{\dim}_B(K)$. 
By Proposition \ref{prop2}  we have that there exists some $\xi \in \C$ with 
 \begin{gather}
 \label{a10} 
\abs{\xi}<\frac {\varepsilon} 2,
\end{gather}
such that $(q(K)+\xi) \cap A=\emptyset$, which in other words means that $p(z):=q(z)+\xi$ does not attain values in  $A$ for $z \in K$. Our conclusion follows from \eqref{a9}, \eqref{a10} and the triangle inequality.
\end{proof}
\subsection{A without interior points argument}
 It may be of some interest to prove a proposition on translates of sets avoiding sets corresponding to Theorem \ref{TH1}. We will do this and also show how this gives a new proof of Theorem \ref{TH1}.  Indeed we may use the methods of \cite{AndRos}, \cite{Rousu} to prove the following proposition which we may as well state and prove for a general Banach space $\mathcal B$.
  \begin{prop} \label{prop3}
  Let $\mathcal B$ be a Banach space, $\varepsilon>0, K \subset   \mathcal B$ be a compact set without interior points and let $A \subset \mathcal B$ be countable. Then for any $\varepsilon>0$ there exists some  $\xi \in \mathcal B$ with $\norm{\xi}<\varepsilon$ such that $(\xi +K) \cap A=\emptyset$. 
  \end{prop}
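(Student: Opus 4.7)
The plan is to convert the problem, as in the previous subsections, into a question about the difference set $A-K$: a suitable $\xi$ exists precisely when the open ball $B_\varepsilon^\circ(0)$ is not entirely contained in $A-K$. Writing $A=\{a_j\}_{j=1}^\infty$, this difference set decomposes as the countable union
\[
A-K=\bigcup_{j=1}^\infty (a_j-K),
\]
so the task reduces to showing that $B_\varepsilon^\circ(0)$ is not covered by the countable family of translates-and-reflections $a_j-K$.

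The key observation is that each $a_j-K$ is the image of $K$ under an isometry of $\mathcal B$, hence is compact (so closed) and, crucially, inherits from $K$ the property of having empty interior. Thus every set in the union is nowhere dense in $\mathcal B$. Since $\mathcal B$ is a Banach space, the Baire category theorem applies: the nonempty open set $B_\varepsilon^\circ(0)$ (itself a Baire space as an open subset of a complete metric space) cannot be written as a countable union of nowhere dense sets. Consequently there exists $\xi\in B_\varepsilon^\circ(0)$ with $\xi\notin a_j-K$ for every $j$, which is precisely the statement $(\xi+K)\cap A=\emptyset$.

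The only step that requires a moment of care is verifying that $a_j-K$ really has empty interior in $\mathcal B$; this is immediate since $x\mapsto a_j-x$ is a homeomorphism of $\mathcal B$ onto itself, and interior is preserved under homeomorphisms. There is no genuine obstacle here beyond recognising that the countability of $A$ together with the "no interior points" hypothesis on $K$ are exactly the ingredients needed to invoke Baire category; this is the abstract content of the argument of \cite{AndRos}, \cite{Rousu} that the proposition refers to. Once this is in place, Theorem \ref{TH1} (and its higher-dimensional Banach space analogue) follows by the same template as in the proofs of Theorems \ref{TH2} and \ref{TH3}: apply Lavrentiev's theorem (or the relevant approximation theorem in the higher-dimensional setting) to get a polynomial $q$ within $\varepsilon/2$ of $f$, note that $q(K)$ is still compact without interior points, and then use the proposition with $q(K)$ in place of $K$ and radius $\varepsilon/2$ to perturb by a translation $\xi$ that avoids $A$.
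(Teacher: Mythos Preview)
Your proof is correct but follows a genuinely different route from the paper's. The paper argues constructively: enumerating $A=\{a_j\}$, it builds a Cauchy sequence $\xi_0=0,\xi_1,\xi_2,\ldots$ where at stage $j$ one perturbs $\xi_{j-1}$ slightly (using that $K$, hence $K+\xi_{j-1}$, has empty interior) so that $d(K+\xi_j,a_j)=\delta_j>0$, and then chooses the next step size $\varepsilon_j<\min(\delta_j,\varepsilon_{j-1}/2)$ small enough that all previous avoidances survive to the limit $\xi=\lim \xi_j$. You instead recognise that this iterative scheme is precisely an unrolled Baire category argument and invoke Baire directly: each $a_j-K$ is closed with empty interior, hence nowhere dense, and a nonempty open ball in a complete metric space cannot be covered by countably many such sets.

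Your version is shorter and more transparent; the paper's version is self-contained (no appeal to Baire as a black box) and slightly more constructive, though it yields no additional quantitative information here. The two are essentially the same argument at different levels of abstraction, and your identification of the iterative construction in \cite{AndRos}, \cite{Rousu} as ``the abstract content'' of Baire is exactly right.
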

  We will be able to use Proposition \ref{prop3} to give a new proof of Theorem \ref{TH1}.

  \begin{proof} (We follow the proof of \cite[Lemma 2]{AndRos} with suitable modifications.)
 Let $A=\{a_j \}_{j=1}^\infty$, let $\xi_0:=0$ and let  $0<\varepsilon_0 <\varepsilon$. 
    For $j=1, 2, \ldots$ there is, since $K$ is a compact set without interior points some $\xi_j \in \mathcal B$  such that  
\begin{gather}\label{neq_a}
     \delta_j:= d(K+\xi_j,a_j)>0, 
    \end{gather}
    and such that \begin{equation} \label{appr}
\norm{\xi_j-\xi_{j-1}}<\frac{\varepsilon_{j-1}}{2},
\end{equation} 
where $\varepsilon_j>0$ for $j\geq 1$ is definied recursively so that 
\begin{equation} \label{epsj}
    \varepsilon_j < \min \left(\delta_j,\frac{\varepsilon_{j-1}} 2 \right).
\end{equation} 
By  the inequalities \eqref{appr}, \eqref{epsj}, and the triangle inequality we find if $0 \leq k <l$ that
\begin{gather}\label{Cauchy}
\begin{split}
\norm{\xi_l-\xi_k}&\leq \norm{\xi_l-\xi_{l-1}}+ \cdots +\norm{\xi_{k+1}-\xi_{k}} 
\\
&<\frac{\varepsilon_{l-1}}{2}
+\cdots+\frac{\varepsilon_k}{2}<\sum_{j=1}^{l-k}\frac{\varepsilon_k}{2^j}<\sum_{j=1}^{\infty}\frac{\varepsilon_k}{2^j}=\varepsilon_k.
\end{split} 
\end{gather} 
 By \eqref{epsj} and \eqref{Cauchy} it follows that  $\{\xi_j\}_{j=1}^\infty$ is a Cauchy-sequence in $\mathcal B$ and converges to an element   \begin{gather} \label{p} \xi:=\lim_{j \to \infty} \xi_j.
 \end{gather}
 By the definition \eqref{p} the inequality \eqref{Cauchy} implies that \begin{equation}\label{pj-p}
    \norm{\xi-\xi_j}\leq \varepsilon_j.
\end{equation}
For  $\xi$ definied by \eqref{p}, the inequalites \eqref{neq_a},  \eqref{epsj}, \eqref{pj-p}  and the triangle inequality gives us
\begin{equation} \label{yy}
    d(K+\xi,a_j) \geq d(K+\xi_j,a_j)-\norm{\xi-\xi_j} \geq \delta_j-\varepsilon_j>0,
\end{equation} for all $a_j\in A.$
  The conclusion  follows by \eqref{pj-p} and \eqref{yy}, by recalling that $ \xi_0=0$ and $0<\varepsilon_0<\varepsilon$.
 \end{proof}
   In order to give another proof of Theorem \ref{TH1} we need the following Lemma 
  \begin{lem} \label{LE3}
    Let $K \subset \C$ be a compact set without interior points and let $p$ be a polynomial. Then $p(K)$ is also a compact set without interior points.
  \end{lem}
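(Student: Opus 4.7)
The compactness of $p(K)$ is immediate from the continuity of $p$, so the content of the lemma is that $p(K)$ has empty interior. The constant case is trivial, so I would assume $p$ has degree $n \geq 1$.

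The plan is to work locally on $p(K)$ away from the finite set $\Sigma := p(\{z \in \C : p'(z) = 0\})$ of critical values of $p$. Specifically, I would show that for every open disk $B \subset \C$ whose closure is disjoint from $\Sigma$, the intersection $p(K) \cap B$ has empty interior. Since $\Sigma$ is finite, every open subset of $\C$ contains such a disk, so this suffices to conclude that no open set lies inside $p(K)$.

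For such a $B$, the restriction $p : p^{-1}(B) \to B$ is an unramified $n$-sheeted covering (it has constant local degree $n$, since no critical point of $p$ lies in $p^{-1}(B)$). Because $B$ is simply connected, this covering is trivial: $p^{-1}(B)$ decomposes as a disjoint union $V_1 \sqcup \cdots \sqcup V_n$ of open sets, each mapped biholomorphically onto $B$ by $p$. Choosing a slightly smaller disk $B'$ with $\overline{B'} \subset B$, the compact sets $W_i := (p|_{V_i})^{-1}(\overline{B'})$ are mapped homeomorphically onto $\overline{B'}$ by $p$. Any $w \in B' \cap p(K)$ has at least one preimage in $K$, and that preimage must lie in $W_1 \cup \cdots \cup W_n$, so
\[
B' \cap p(K) \subset \bigcup_{i=1}^n p(K \cap W_i).
\]
Each $K \cap W_i$ is compact with empty interior (as a subset of $K$), and since $p|_{W_i}$ is a homeomorphism, the image $p(K \cap W_i)$ is also compact with empty interior. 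A finite union of closed nowhere dense subsets of $\C$ is nowhere dense, so $B' \cap p(K)$ has empty interior; in particular $B' \not\subset p(K)$.

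I expect the main obstacle to be the branching of $p$ at its critical points, which would destroy any naive attempt to \emph{pull back} open subsets of $p(K)$ to open subsets of $K$ globally. The plan sidesteps this entirely by working only over disks disjoint from the finite set $\Sigma$, over which $p$ is an unramified covering and hence admits $n$ continuous local inverses thanks to simple connectedness of the base.
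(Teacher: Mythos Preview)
Your argument is correct. The paper does not give a separate proof of this lemma; it deduces it from the more general Lemma~\ref{LE4} for polynomial maps $\R^n\to\R^n$, proved there via the inverse function theorem. The overall shape is the same as yours: stay away from the critical values, use local invertibility to carry the ``nowhere dense'' property through $p$, and conclude via the fact that a finite union of closed nowhere dense sets is nowhere dense.

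The genuine difference is in how the local inverses are obtained. The paper picks a single regular value $w$, lists its finitely many preimages, applies the inverse function theorem at each one, and then shrinks to a common ball on which all the local inverses are defined. You instead observe that away from the finite set $\Sigma$ of critical values the polynomial is an $n$-sheeted covering, and that over a simply connected disk this covering trivializes, giving all $n$ sheets at once. Your route is slicker and exploits the complex structure (finitely many critical values, exactly $n$ preimages of each regular value by the fundamental theorem of algebra, monodromy over a disk is trivial); the paper's route is more pedestrian but is what is needed for the $\R^n$ generalization, where the critical set need not be finite and the fibre cardinality need not be constant.
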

  We might as well prove the following more general lemma in order to obtain a higher dimensional analogue of Theorem \ref{TH1} (by identifying $\C$ with $\R^2$ Lemma \ref{LE3} is a direct consequence of Lemma \ref{LE4}).
  \begin{lem} \label{LE4}
    Let $K \subset \R^n$ be a compact set without interior points and let $p$ be a polynomial mapping $p:\R^n \to \R^n$. Then $p(K)$ is also a compact set without interior points.
  \end{lem}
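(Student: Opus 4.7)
The compactness of $p(K)$ is immediate, since $p$ is continuous and $K$ is compact; the content of the lemma is that $p(K)$ has empty interior. The plan is to show that $p(K)$ is meager in $\R^n$ and then invoke the Baire category theorem.

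Split $p(K) = p(K\cap Z) \cup p(K\setminus Z)$, where $Z := \{x \in \R^n : \det Dp(x) = 0\}$ is the critical set of the polynomial mapping $p$. Since $p$ is smooth, Sard's theorem applies and the set $p(Z)$ of critical values has Lebesgue measure zero. Hence $p(K\cap Z)$, being the compact continuous image of $K\cap Z$ and sitting inside the null set $p(Z)$, is closed with empty interior, hence nowhere dense in $\R^n$.

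For $p(K\setminus Z)$ the idea is local inversion plus a countable cover. At every $x \in K\setminus Z$ the Jacobian $Dp(x)$ is invertible, so the inverse function theorem supplies an open ball $B(x,r_x)$ on which $p$ restricts to a diffeomorphism onto its (open) image. The closed ball $\overline{B(x,r_x/2)}$ sits inside $B(x,r_x)$, and $K \cap \overline{B(x,r_x/2)}$ is compact with empty interior in $\R^n$ (since $K$ has no interior, and interior commutes with intersection here). Its image under the diffeomorphism $p|_{B(x,r_x)}$ is thus a compact subset of $\R^n$ with empty interior, i.e. nowhere dense. The Lindelöf property of $\R^n$ extracts from $\{B(x,r_x/2)\}_{x \in K\setminus Z}$ a countable subcover $\{B(x_i,r_{x_i}/2)\}_{i\geq 1}$ of $K\setminus Z$, and therefore
\[
p(K\setminus Z) \;\subset\; \bigcup_{i=1}^\infty p\bigl(K \cap \overline{B(x_i, r_{x_i}/2)}\bigr),
\]
a countable union of nowhere dense sets. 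Together with the nowhere-dense set $p(K\cap Z)$ this shows $p(K)$ is meager in $\R^n$, so by the Baire category theorem $p(K)$ has empty interior.

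The main obstacle, and the reason a naive measure-theoretic or Hausdorff-dimension argument does not suffice, is that a compact set without interior points can have positive Lebesgue measure and even full Hausdorff dimension (as in the fat-Cantor example $[0,1]+iS$ from the introduction). What one really needs is that \emph{having empty interior is a topological (Baire-category) property}, preserved by local diffeomorphisms; Sard's theorem is precisely what reduces the general polynomial situation to that local-diffeomorphism setting. A subtlety to be careful about in the write-up is the degenerate case $\det Dp \equiv 0$, but this is handled for free by the same argument: then $Z = \R^n$, so $p(K) = p(K\cap Z) \subset p(Z)$ has measure zero by Sard and the second step is vacuous.
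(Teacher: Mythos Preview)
Your proof is correct. The overall strategy matches the paper's---split off the critical set $Z$, handle its image, then use the inverse function theorem on the regular part---but the execution differs in two ways. For the critical values, the paper argues (polynomial-specifically) that the zero locus of the nonzero polynomial $\det J_p$ has dimension $\le n-1$ and that its Lipschitz image under $p$ therefore has Hausdorff dimension $\le n-1$; you invoke Sard instead, which the paper's own Remark notes as an alternative, and which also handles the degenerate case $\det J_p \equiv 0$ cleanly (the paper treats this separately, and only for affine $p$). For the regular part, the paper argues by contradiction: assuming some open $D\subset p(K)$, it picks a regular value $w\in D$, observes that $p^{-1}(w)\cap K$ is finite, and shows a small ball around $w$ is a \emph{finite} union of nowhere-dense sets. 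You instead take a countable Lindel\"of cover of $K\setminus Z$ by inverse-function-theorem balls and finish with Baire. Your route is slicker and extends verbatim to $C^1$ maps; the paper's is slightly more elementary in that it avoids the Baire category theorem.
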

  
  \begin{proof}
  Since $p$ is continuous and $K$ is compact it is clear that $p(K)$ is compact. It remains to show that $p(K)$ is nowhere dense, which since $p(K)$ is compact is equivalent to the fact that $p(K)$ has no interior points.
    The result is  true if $p(z)=Az+b,$  where $b \in \R^n$ and $A$ is a non-invertible $n\times n$ matrix. In such a case $p$ maps $\R^n$ onto an affine subspace $E \subset \R^n$ of dimension at most $n-1$. Since $E$ does not have full dimension, also $p(K) \subset E$ does not have full dimension and is thus  nowhere dense.
        Now let $p$ be a polynomial mapping $p:\R^n \to \R^n$ that is not of such type. The mapping $p$ is locally invertible whenever the Jacobian determinant $\det(J_p(z))$ of $p(z)$ is non-zero.  Let $A=\{z \in K:\det(J_p(z))=0\}$ and $p(A)=B$. It is clear that $B$ is a closed set and since $p(z)$ is not of the form $Az+b$  its Jacobian determinant is a non-zero polynomial and its zero set $C \subset \R^n$ will be a real variety of dimension at most $n-1$. Since  $p$ is a polynomial mapping which is Lipschitz, also $p(C)$ and $B=p(A) \subset p(C)$ will have Hausdorff dimension at most $n-1$ and thus be nowhere dense. Let us now assume that $p(K)$ is not nowhere dense, i.e.  there exist som open set $D \subset p(K)$. Since $B$ is a nowhere dense set, we may choose some $w \in D \setminus B$ (we remark that since $D$ is open and $B$ closed, then $D\setminus B$ is open).  Let us now  consider the equation $p(z)=w$. 
         If   the equation has infinitely many roots for $z \in K$, it must have some limit point $z_0 \in K$  (since $K$ is compact) such that $p(z_0)=w$  by continuity. At such a point $z_0$ the function is not locally invertible and thus $\det(J_p(z_0))=0$ which implies that $w \in B$ contradicting our assumption that $w \in D \setminus B$. Thus we know that the equation $p(z)=w$ has  $m$ distinct roots\footnote{where by Bezouts theorem $m$ is bounded from above by $d_1 \cdots d_n$, where $d_j$ is the degree of the polynomial $p_j$ where $p=(p_1,\ldots,p_n)$} for each $j=1,\ldots,m$ and that $\det(J_p(z_j)) \neq 0$ for $j=1,\ldots,m$.  By the inverse function theorem the polynomial mapping $p$ gives homeomorphisms $f_j$ between a small neighborhood of $z_j$  for each $j=1,\ldots,m$ and a  small neighborhood of $w$. Let $\varepsilon>0$ be sufficiently small such that  $\mathcal O=B_{\varepsilon}(w)$ is a subset of $D \setminus B$ as well as all these small neighborhoods of $w$. Let $E_j=f_j^{-1}(\mathcal O)$.   It is clear that
      $$\mathcal O  \cap p(K)= \bigcup_{j=1}^m f_j(E_j \cap K)$$ is a finite union of nowhere dense sets (``nowhere dense'' is a topological property that is preserved by homeomorphisms) and thus  nowhere dense\footnote{For a proof see Proposition 7.1 in \url{https://www.ucl.ac.uk/~ucahad0/3103_handout_7.pdf}-}. This contradicts our assumption that $\mathcal O \subset D \subset p(K)$, so $p(K)$ is nowhere dense.
      \end{proof}  
  \begin{rem}
    By replacing our argument for why the set of critical values of $p$ has measure zero by Sard's theorem, Lemma \ref{LE4} hold also for $\mathcal C^1$ functions.
  \end{rem}
    
   \noindent {\em{Proof of Theorem  \ref{TH1}.}}
    Since $K \subset \C$ is a compact set with connected complement without interior points, Lavrentiev's theorem gives us a polynomial $q$ such that 
    \begin{gather} \label{a12}
    \max_{z \in K} \abs{f(z)-q(z)}<\frac{\varepsilon} 2.
  \end{gather}
  By Lemma \ref{LE3} we have that $q(K)$ is a compact set without interior points.
By Proposition \ref{prop3}  (with $\mathcal B=\C$), we have that there exists some $\xi \in \C$ with 
 \begin{gather}
 \label{a21} 
\abs{\xi}<\frac {\varepsilon} 2,
\end{gather}
such that $(q(K)+\xi) \cap A=\emptyset$, which in other words means that $p(z):=q(z)+\xi$ does not attain values in  $A$ for $z \in K$. Our conclusion follows from \eqref{a12}, \eqref{a21} and the triangle inequality. \qed

\section{Higher dimensional results}

\subsection{Finite dimensional results}
  One advantage with our approach is that it readily generalizes to several variables, by using higher dimensional analogues of the one variable approximation theorems (Weierstrass theorem, Lavrentiev's theorem).  By the $n$-dimensional version of the Weierstrass approximation theorem\footnote{which is an easy consequence of the Stone Weierstrass theorem, see e.g. \cite{mst} } we may prove 
  \begin{thm} \label{TH4}
  Let $A \subset \R^n $ be any countable set, let $K \subset \R^n$ be  a compact set without interior points, and let $f:K \to \R^n$ be a continuous function. Then  given any $\varepsilon>0$ there exists some polynomial mapping $p$ such that $p(x) \not \in A$ if $x \in K$ and such that
  \[
    \max_{\vvx \in K} \norm{f(\vvx)-p(\vvx)}<\varepsilon.
  \]
 \end{thm}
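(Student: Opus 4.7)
The plan is to mirror the second proof of Theorem~\ref{TH1} (the one that invoked Proposition~\ref{prop3} together with Lemma~\ref{LE3}), but work throughout in $\R^n$ instead of in $\C$, replacing Lavrentiev's theorem by the $n$-dimensional Weierstrass approximation theorem and replacing Lemma~\ref{LE3} by the already-proved Lemma~\ref{LE4}.

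First, I would apply the $n$-dimensional Weierstrass approximation theorem componentwise to the coordinate functions of $f$ to produce a polynomial mapping $q:\R^n \to \R^n$ with
\[
  \max_{x \in K}\norm{f(x)-q(x)}<\frac{\varepsilon}{2}.
\]
This is where the one-variable holomorphic setting (which forced us to use Lavrentiev's theorem for Theorem~\ref{TH1}) gives way to a purely real setting: we are now approximating a continuous $\R^n$-valued function on a compact subset of $\R^n$ by real polynomial mappings, for which Stone--Weierstrass suffices and no analogue of Lavrentiev's theorem is needed.

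Next, I would invoke Lemma~\ref{LE4} to conclude that the image $q(K)$ is again a compact subset of $\R^n$ \emph{without interior points}. This is the essential structural input, and it is exactly why Lemma~\ref{LE4} was formulated in the generality of polynomial mappings $\R^n \to \R^n$. Armed with this, I would apply Proposition~\ref{prop3} with Banach space $\mathcal{B}=\R^n$, compact set $q(K)$, countable set $A$, and threshold $\varepsilon/2$, to obtain some $\xi \in \R^n$ with $\norm{\xi}<\varepsilon/2$ such that $(q(K)+\xi)\cap A=\emptyset$. Setting $p(x):=q(x)+\xi$, the triangle inequality gives $\norm{f(x)-p(x)}<\varepsilon$ for all $x \in K$, and by construction $p(x)\notin A$ on $K$.

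I do not expect a genuine obstacle here: the proof is essentially a direct transcription of the alternative proof of Theorem~\ref{TH1}, and the only substantive step beyond the one-variable case — namely that $q(K)$ inherits the property of being compact without interior points from $K$ — is precisely the content of Lemma~\ref{LE4}, whose proof (via an inverse function theorem argument together with the fact that the critical values of $q$ form a set of Hausdorff dimension at most $n-1$) is the main new ingredient in the higher-dimensional setting.
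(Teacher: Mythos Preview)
Your proposal is correct and follows essentially the same route as the paper: approximate $f$ by a polynomial mapping $q$ via the multivariate Weierstrass theorem, use Lemma~\ref{LE4} to see that $q(K)$ is compact without interior points, and then apply Proposition~\ref{prop3} (with $\mathcal B=\R^n$) to find a small translate $\xi$ so that $p=q+\xi$ avoids $A$.
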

 We remark that we will use Proposition \ref{prop3} in the proof. If we instead use Proposition \ref{prop1} (which have a somewhat simpler proof) we obtain the slightly weaker version where we have to assume that $K$ has Lebesgue measure zero.
\begin{proof} (We follow the proof of Theorem \ref{TH2}) By the multivariate Weierstrass theorem we may find a polynomial mapping $q: \R^n \to \R^n$ such that
\begin{gather} \label{aaa1} \max_{\vvx \in K}\norm{q(\vvx)-f (\vvx)}<\frac {\varepsilon} 2.
\end{gather}
 Since $K \subset \R^n$ is a compact set without interior points and $q:\R^n \to \R^n$ is a polynomial map, then by Lemma \ref{LE4} also $q(K)$ is a compact set without interior points. By Proposition \ref{prop3}  we have that there exists some $\xi \in \R^n$ with 
 \begin{gather}
 \label{aaa2} 
\norm{\xi}<\frac {\varepsilon} 2,
\end{gather}
such that $(q(K)+\xi) \cap A=\emptyset$, which in other words means that $p(\vvx):=q(\vvx)+\xi$ does not attain values in  $A$ for $\vvx \in K$. Our conclusion follows from \eqref{aaa1}, \eqref{aaa2} and the triangle inequality.
 \end{proof}

  \begin{thm} \label{TH5}
  Let  $K \subset \R^n$  and let $A \subset \R^m$. Furthermore assume that
   \begin{gather}
     \overline{\dim}_{B}(K)+\dim_{H}(A)<m, \\ \intertext{or}
      \dim_{H}(K)+\overline{\dim_{B}}(A)<m.
        \end{gather}
        Then given any continuous function $f:K \to \R^m$ and any $\varepsilon>0$ there exists some polynomial mapping $p$ such that $p(x) \not \in A$ if $x \in K$ and such that
  \[
    \max_{\vvx \in K} \norm{f(\vvx)-p(\vvx)}<\varepsilon.
  \]
  \end{thm}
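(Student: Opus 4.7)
The plan is to mirror exactly the structure of the proof of Theorem \ref{TH3}, but with $\C$ replaced by $\R^m$ and with the multivariate Weierstrass approximation theorem in place of Lavrentiev's theorem. (As in Theorem \ref{TH4}, I will implicitly take $K$ to be compact, since that hypothesis is needed for uniform polynomial approximation and appears to be missing from the statement only by oversight.)

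First I would apply the multivariate Weierstrass theorem coordinatewise to obtain a polynomial mapping $q:\R^n \to \R^m$ with
\[
    \max_{\vvx \in K}\norm{q(\vvx)-f(\vvx)}<\frac{\varepsilon}{2}.
\]
Since $q$ has bounded partial derivatives on the compact set $K$, its restriction $q|_K$ is Lipschitz, and hence \cite[Corollary 2.4]{falconer} gives $\dim_H(q(K))\le\dim_H(K)$ and $\overline{\dim}_B(q(K))\le\overline{\dim}_B(K)$. Combined with either of the two hypotheses, this yields $\overline{\dim}_B(q(K))+\dim_H(A)<m$ or $\dim_H(q(K))+\overline{\dim}_B(A)<m$.

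Next I would invoke the natural $\R^m$-version of Proposition \ref{prop2}, applied to the pair $q(K),A\subset\R^m$, to produce a translate $\xi\in\R^m$ with $\norm{\xi}<\varepsilon/2$ and $(q(K)+\xi)\cap A=\emptyset$. Setting $p(\vvx):=q(\vvx)+\xi$ then gives a polynomial mapping with $p(\vvx)\notin A$ for $\vvx\in K$, and the triangle inequality finishes the bound $\max_{\vvx\in K}\norm{f(\vvx)-p(\vvx)}<\varepsilon$.

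The only real issue is that Lemma \ref{lem2} and Proposition \ref{prop2} were formally stated for subsets of a single $\R^n$, whereas here the sets $q(K)$ and $A$ live in $\R^m$, with possibly $m\ne n$. This is not a genuine obstacle: the proof of Lemma \ref{lem2} relies only on the product formula for Hausdorff dimension in $\R^m\times\R^m$ and on the Lipschitz map $(x,y)\mapsto x-y$ on $\R^m$, while the proof of Proposition \ref{prop2} only uses the translation-invariance of $m$-dimensional Lebesgue measure. Both arguments therefore go through verbatim with $n$ replaced by $m$, so no new idea is required beyond recording this mild notational generalization.
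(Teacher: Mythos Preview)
Your proposal is correct and follows essentially the same route as the paper's own proof: apply the multivariate Weierstrass theorem to get $q$, use the Lipschitz property of $q$ on $K$ to control $\dim_H(q(K))$ and $\overline{\dim}_B(q(K))$, then invoke Proposition~\ref{prop2} (with the ambient space taken to be $\R^m$) to find the translate $\xi$, and finish with the triangle inequality. Your additional remarks about the implicit compactness hypothesis on $K$ and about reading Proposition~\ref{prop2} with $n$ replaced by $m$ are accurate clarifications that the paper itself leaves tacit.
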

  \begin{proof}
  (We follow the proof of Theorem \ref{TH4}) By the multivariate Weierstrass theorem we may find a polynomial mapping $q: \R^n \to \R^m$ such that
\begin{gather} \label{aaaa1} \max_{\vvx \in K}\norm{q(\vvx)-f (\vvx)}<\frac {\varepsilon} 2 
\end{gather}
   Since a polynomial map $q:\R^n \to \R^m$  is Lipschitz we have that $\dim_H(q(K)) \leq \dim_H(K)$ and $\overline{\dim}_B(q(K)) \leq \overline{\dim}_B(K)$. Thus by Proposition \ref{prop2} we have that  for some $\xi \in \R^m$ with
   \begin{gather} \label{aaaa2} \norm{\xi}<\frac \varepsilon 2 \end{gather} that $(q(K)+\xi) \cap A=\emptyset$, which in other words means that $p(\vvx):=q(\vvx)+\xi$ does not attain values in  $A$ for $\vvx \in K$. Our conclusion follows from \eqref{aaaa1}, \eqref{aaaa2} and the triangle inequality.
 \end{proof}
 
In several complex variables Lavrentiev's theorem may be generalised to a result of Harvey-Wells\footnote{This  is a sharper version of a result of Hörmander-Wermer \cite{HWe}, see discussion in \cite[section 8.]{Levenberg}.}
 \cite{HW} that if $E$  is a {\em totally real\footnote{For the definition see \cite[p.115]{Levenberg}.}}  submanifold of class $C^1$ in an open set in $\C^n$ then for any compact $K \subset E$ where $K$ is polynomially convex then any function $f$ continuous on $K$ may be uniformly approximated by polynomials on $K$. We remark that just like the classical Weierstrass theorem is a special case of the Lavrentiev theorem (since an interval $[a,b] \in \R$ is a compact set with connected complement and without interior points in $\C$), the multivariate Weierstrass theorem is a special case of the Harvey-Wells theorem (since $\R^n$ is a totally real manifold in $\C^{n}$ and any compact set $K \subset \R^n$ is polynomially convex).

\begin{thm} \label{TH6}
 Let $E \subset \C^n$ be a totally real manifold of class $\mathcal C^1$ and let  $K \subset E$ be a compact set that is polynomially convex. Then for any continuous function $f:K \to \C^m$ and any set $A \subset \C^m$ with $\overline{\dim}_B (K) +\dim_H (A)<2m$ or ${\dim}_H(K)  +\overline{\dim}_B (A)<2m$
there exists some polynomial mapping $p$ such that \[
    \max_{z \in K} \abs{f(z)-p(z)}<\varepsilon.
  \] and such that $p(z) \not \in A$ if $z \in K$.
\end{thm}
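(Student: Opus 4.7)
The plan is to mimic the proofs of Theorems \ref{TH3} and \ref{TH5}, replacing the multivariate Weierstrass theorem by Harvey--Wells in the initial approximation step and then applying Proposition \ref{prop2} in $\R^{2m}$ via the identification $\C^m\cong\R^{2m}$.

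First I would invoke the Harvey--Wells theorem componentwise. Writing $f=(f_1,\ldots,f_m)$ with each $f_j:K\to\C$ continuous, the hypotheses on $E$ and $K$ are exactly those required to apply \cite{HW} to each $f_j$ with tolerance $\varepsilon/(2\sqrt m)$, producing polynomials $q_1,\ldots,q_m$. Assembling these into a polynomial mapping $q:\C^n\to\C^m$ yields
\[
\max_{z\in K}\norm{f(z)-q(z)}<\frac{\varepsilon}{2}.
\]

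Next I would observe that $q$, being a polynomial mapping $\C^n\to\C^m$, is Lipschitz on the compact set $K$. Hence by \cite[Corollary 2.4]{falconer} and the analogous property for the upper box counting dimension one has $\dim_H(q(K))\leq\dim_H(K)$ and $\overline{\dim}_B(q(K))\leq\overline{\dim}_B(K)$, so the dimension hypothesis on $K$ and $A$ transfers to one on $q(K)$ and $A$ inside $\R^{2m}$. Applying Proposition \ref{prop2} (with the ambient dimension there taken to be $2m$) produces some $\xi\in\C^m$ with $\norm{\xi}<\varepsilon/2$ and $(q(K)+\xi)\cap A=\emptyset$. Setting $p:=q+\xi$ gives a polynomial mapping avoiding $A$ on $K$, and the triangle inequality delivers $\max_{z\in K}\norm{f(z)-p(z)}<\varepsilon$.

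I do not anticipate a significant obstacle: the dimension bookkeeping and the Lipschitz behaviour of polynomials do all the work, and the only step that is not an immediate quotation of an earlier result is the bootstrap from scalar-valued to vector-valued Harvey--Wells, which is handled by the componentwise application above.
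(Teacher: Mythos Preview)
Your proposal is correct and follows essentially the same route as the paper's proof: Harvey--Wells for the initial approximation, then Proposition~\ref{prop2} applied in $\R^{2m}$ to the image $q(K)$, then a translation by $\xi$. The only differences are expository --- you make explicit the componentwise reduction of vector-valued Harvey--Wells and the Lipschitz/dimension transfer $\dim_H(q(K))\le\dim_H(K)$, $\overline{\dim}_B(q(K))\le\overline{\dim}_B(K)$, whereas the paper absorbs both of these into the phrase ``we follow the proof of Theorem~\ref{TH5}''.
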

\begin{proof}
  (We follow the proof of Theorem \ref{TH5}) By the Harvey-Wells theorem we may find a polynomial map $q: \C^n \to \C^m$ such that
\begin{gather} \label{aaaaa1} \max_{\vvx \in K}\norm{q(\vvx)-f (\vvx)}<\frac {\varepsilon} 2.
\end{gather}
Thus by identifying $\C^m$ with $\R^{2m}$ and by Proposition \ref{prop2} we have that  for some $\xi \in \C^m$ with
   \begin{gather} \label{aaaaa2} \norm{\xi}<\frac \varepsilon 2 \end{gather} that $(q(K)+\xi) \cap A=\emptyset$, which in other words means that $p(\vvx):=q(\vvx)+\xi$ does not attain values in  $A$ for $\vvx \in K$. Our conclusion follows from \eqref{aaaaa1}, \eqref{aaaaa2} and the triangle inequality.
 \end{proof}
 
 Since $E \subset \C^n$ is a totally real manifold of class $\mathcal C^1$ in particular means that $\dim_H (E)=\overline{\dim}_B(E) \leq n $,  a sufficient condition in Theorem \ref{TH6} is that $\dim_H A<2m-n$. We obtain the following Corollary.

\begin{cor} \label{cor}
 Let $E \subset \C^n$ be a totally real manifold of class $\mathcal C^1$ and let  $K \subset E$ be a compact set that is polynomially convex. Then for any continuous function $f:K \to \C^m$ and any set $A \subset \C^m$ with $\dim_H (A)<2m-n$
there exists some polynomial function $p$ such that \[
    \max_{z \in K} \abs{f(z)-p(z)}<\varepsilon.
  \] and such that $p(z) \not \in A$ if $z \in K$.
\end{cor}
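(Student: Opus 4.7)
My plan is to derive this corollary as an immediate consequence of Theorem \ref{TH6}. The strategy is to verify that the hypothesis $\dim_H(A) < 2m - n$ is sufficient to imply $\overline{\dim}_B(K) + \dim_H(A) < 2m$, which is the first of the two alternative dimensional hypotheses in Theorem \ref{TH6}.

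First I would observe that since $E \subset \C^n$ is a totally real $\mathcal C^1$ submanifold, it has real dimension at most $n$. Covering the compact set $K$ by finitely many coordinate charts of $E$, I can write $K$ as a finite union of images of compact subsets of $\R^n$ under $\mathcal C^1$ parameterizations into $\C^n \cong \R^{2n}$. Since such maps are Lipschitz when restricted to compact sets, and since the upper box counting dimension is both finitely stable under unions and non-increasing under Lipschitz maps (see \cite[Chapter 3]{falconer}), and since any compact subset of $\R^n$ has upper box dimension at most $n$, we obtain $\overline{\dim}_B(K) \leq n$. This is precisely the bound recorded in the paragraph preceding the Corollary.

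Combining $\overline{\dim}_B(K) \leq n$ with the hypothesis $\dim_H(A) < 2m - n$ gives
\[
\overline{\dim}_B(K) + \dim_H(A) < n + (2m - n) = 2m,
\]
so the first alternative of Theorem \ref{TH6} applies. Invoking that theorem yields a polynomial mapping $p:\C^n \to \C^m$ such that $p(z) \notin A$ for $z \in K$ and $\max_{z \in K} \norm{f(z) - p(z)} < \varepsilon$, which is the desired conclusion.

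The only non-routine point is the bound $\overline{\dim}_B(K) \leq n$, since upper box counting dimension does not enjoy the automatic monotonicity of Hausdorff dimension under set inclusion in unbounded ambient sets. However, the $\mathcal C^1$ structure on $E$ together with the compactness of $K$ reduces this to the local parameterization argument above, so no real obstacle is expected.
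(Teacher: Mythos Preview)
Your proposal is correct and follows essentially the same route as the paper: the corollary is deduced from Theorem~\ref{TH6} by noting that a compact subset $K$ of a totally real $\mathcal C^1$ manifold $E\subset\C^n$ satisfies $\overline{\dim}_B(K)\le n$, whence $\dim_H(A)<2m-n$ forces the first alternative hypothesis of Theorem~\ref{TH6}. You simply supply more detail on the bound $\overline{\dim}_B(K)\le n$ (via finitely many Lipschitz coordinate charts) than the paper, which in the sentence preceding the corollary just asserts $\dim_H(E)=\overline{\dim}_B(E)\le n$.
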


\subsection{Infinite dimensional results}
It is well known that Weierstrass theorem on approximation of continuous functions on compact sets by polynomials generalizes from $\R^n$ to infinite dimensional real Banach spaces  \cite{ww}.  Since compact sets in infinite-dimensional Banach spaces are always without interior points the analogue of Lemma \ref{LE4} become trivial.
We are thus ready to prove our main approximation theorem on Banach spaces wich is a natural analogue of Theorem \ref{TH1} and Theorem \ref{TH6}.

\begin{thm}  \label{TH7}
 Let $\mathcal B$ be an infinite dimensional real Banach space, let $A \subset \mathcal B$ be a countable set, let $K \subset \mathcal B$ be a compact set and let $f:K \to  \mathcal B$ be a continuous function.
  Then for any $\varepsilon>0$  there exists some polynomial mapping $p:K \to \mathcal B$ such that
 \begin{gather*}
   \max_{x \in K} \norm{f(x)-p(x)}<\varepsilon,
 \end{gather*}
 and such that $p(x) \not \in  A$ if $x \in K$.
   \end{thm}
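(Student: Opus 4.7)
The plan is to run the proof of Theorem~\ref{TH4} essentially verbatim in the Banach-space setting, with the only substantive change being that the role of Lemma~\ref{LE4} is taken over by a general topological fact about compact sets in infinite-dimensional spaces.

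First, I would apply the Banach-space Weierstrass approximation theorem of \cite{ww} to the continuous function $f:K \to \mathcal B$ to obtain a polynomial mapping $q:\mathcal B \to \mathcal B$ satisfying
\begin{equation*}
  \max_{x \in K} \norm{f(x)-q(x)} < \frac{\varepsilon}{2}.
\end{equation*}
Next, I would observe that $q(K)$ is a compact subset of $\mathcal B$ (continuous image of a compact set), and that because $\mathcal B$ is infinite-dimensional, every compact subset is automatically nowhere dense: if $q(K)$ contained a nonempty open ball then by translation and scaling the closed unit ball of $\mathcal B$ would be compact, contradicting Riesz's lemma. This is precisely the remark flagged earlier in the excerpt and replaces the use of Lemma~\ref{LE4}.

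With $q(K)$ now known to be compact and without interior points, I would apply Proposition~\ref{prop3}, which is already formulated for an arbitrary Banach space, to the compact set $q(K)$, the countable set $A$, and the threshold $\varepsilon/2$. This yields some $\xi \in \mathcal B$ with $\norm{\xi} < \varepsilon/2$ such that $(q(K)+\xi) \cap A = \emptyset$. Setting $p(x) := q(x) + \xi$ produces a polynomial mapping whose image on $K$ misses $A$, and the triangle inequality gives
\begin{equation*}
  \max_{x \in K} \norm{f(x)-p(x)} \le \max_{x \in K}\norm{f(x)-q(x)} + \norm{\xi} < \frac{\varepsilon}{2}+\frac{\varepsilon}{2} = \varepsilon.
\end{equation*}

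There is no real obstacle beyond assembling the ingredients: the genuine work lies in Proposition~\ref{prop3} and in the Weierstrass-type theorem of \cite{ww}, both of which are invoked as black boxes. The mildly nontrivial point is the empty-interior assertion for $q(K)$, but this is a standard consequence of Riesz's lemma and requires no additional hypothesis on $\mathcal B$ (such as separability or the existence of a Schauder basis) beyond what is already needed to state the Banach-space Weierstrass theorem.
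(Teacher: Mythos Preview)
Your proof is correct and follows essentially the same approach as the paper's own proof: approximate $f$ by a polynomial $q$ via the Banach-space Weierstrass theorem, note that $q(K)$ is compact without interior points since $\mathcal B$ is infinite dimensional, then apply Proposition~\ref{prop3} to translate by a small $\xi$ avoiding $A$. The only minor addition in your write-up is the explicit mention of Riesz's lemma to justify the empty-interior claim, which the paper simply asserts.
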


   \begin{proof}
  (We follow the proof of Theorem \ref{TH5}) By the Weierstrass approximation theorem for real Banach spaces \cite[Theorem 2.5]{ww} we may find a continuous polynomial map $q: \mathcal B \to \mathcal B$ such that
\begin{gather} \label{kk1} \max_{\vvx \in K}\norm{q(\vvx)-f (\vvx)}<\frac {\varepsilon} 2.
\end{gather}
Since $K$ is compact and  $q$ is continuous, also $q(K)$ is compact.
Since $\mathcal B$ is infinite dimensional and any compact set in an infinite dimensional Banach space is a compact set without interior points, it follows that $q(K)$ is a compact set without interior points. By  Proposition \ref{prop3} we have that  for some $\xi \in \mathcal B$ with
   \begin{gather} \label{kk2} \norm{\xi}<\frac \varepsilon 2 \end{gather} that $(q(K)+\xi) \cap A=\emptyset$, which in other words means that $p(\vvx):=q(\vvx)+\xi$ does not attain values in  $A$ for $\vvx \in K$. Our conclusion follows from \eqref{kk1}, \eqref{kk2} and the triangle inequality.
 \end{proof}

\bibliographystyle{plain}

\end{document}